\documentclass{amsart}

\usepackage{amsmath,amssymb,amsthm,mathrsfs, dcpic, pictex,etex, graphicx}
\usepackage[margin=1in]{geometry}
\begin{document}
\fontsize{10.95}{14}\rm
\newcommand{\oper}[1]{\operatorname{#1}} 
\newcommand{\p}{\mathbb{P}^1}
\newcommand{\z}{\mathbb{Z}}
\newcommand{\xx}{\mathcal{X}}
\newcommand{\yy}{\mathcal{Y}}
\newcommand{\co}{\mathbb{C}}
\newcommand{\pp}{\mathscr{P}}
\newcommand{\f}{\mathbb{F}_p}
\newcommand{\fc}{\overline{\mathbb{F}_p}}
\newcommand{\iso}{\to^{\!\!\!\!\!\!\!\sim\,}}
\newcommand{\q}{\mathbb{Q}}
\newcommand{\tm}{((t^{\frac{1}{m}}))}
\newcommand{\zun}{\z_p^{\oper{un}}}
\newcommand{\qun}{\q_p^{\oper{un}}}
\newcommand{\kar}{\overline{K}}
\newcommand{\pa}{\mathfrak{p}}
\newcommand{\al}{\widetilde{\mathbb{Z}}^{\overline{\mathbb{Q}}}}
\newcommand{\lal}{\widetilde{\mathbb{Z}_p}^{\overline{\mathbb{Q}_p}}}

\title{The fields of definition of branched Galois covers of the projective line}

\author{Hilaf Hasson}
\date{January 19, 2013}
\begin{abstract}
In this paper I explore the structure of the fields of definition of Galois branched covers of the projective line over $\bar \q$. The first
main result
states that every mere cover model has a unique minimal field of definition where its automorphisms are defined, and goes on to describe
special properties of this field. One
corollary of this result is that for every $G$-Galois branched cover there is a field of definition which is Galois over its field of
moduli, with Galois group a subgroup of $\oper{Aut}(G)$. The second main theorem states that the field resulting by adjoining to the field
of moduli all of the roots of unity whose order divides some power of $|Z(G)|$ is a field of definition. By combining this result with
results from an earlier paper, I prove corollaries related to the Inverse Galois Problem. For example, it allows me to prove that for every
finite group $G$, there
is an extension of number
fields $\q\subset E\subset F$ such that $F/E$ is $G$-Galois, and $E/\q$
ramifies only over those primes that divide $|G|$. I.e., $G$ is realizable over a field that is
``close'' to $\q$.
\end{abstract}
\maketitle
\theoremstyle{plain}
\newtheorem{thm}{Theorem}[section]
\newtheorem{rmk}[thm]{Remark}
\newtheorem{qst}[thm]{Question}
\newtheorem{prp}[thm]{Proposition}
\newtheorem{hyp}[thm]{Hypotheses}
\newtheorem{crl}[thm]{Corollary}
\newtheorem{cnj}[thm]{Conjecture}
\newtheorem{stt}[thm]{Statement}
\newtheorem{lem}[thm]{Lemma}
\newtheorem{dff}[thm]{Definition}
\newtheorem{clm}[thm]{Claim}
\newtheorem{ntt}[thm]{Notation}
\newtheorem{example}[thm]{Example}
\section{Overview}\label{E:overview}
The Inverse Galois Problem asks whether every finite group $G$ is
realizable as a Galois group over $\q$ (or more generally over every number field $K$). Most attempts to solve
the Inverse Galois Problem over a number field $K$ have focused on trying to solve its geometric analogue, the Regular Inverse Galois
Problem. The Regular Inverse Galois Problem asks whether for every finite group $G$ there is a $G$-Galois branched cover of the projective
line over $\bar \q$ that is defined (together with its automorphisms) by polynomials with coefficients in $K$. It is well known that
for every finite group $G$ there is a $G$-Galois branched
cover of the projective line over $\bar \q$. (This is proven via transcendental methods; see Remark
\ref{E:riemann} for more details.) While most previous work has focused on the field of moduli (see Definition \ref{E:defoffom}) of such
covers, the focus of this paper is on the structure of their fields of definition.

In Section \ref{E:introduction} we provide an introduction to the definitions and concepts in this paper. In Section \ref{E:meresection} we
give a bijection between mere cover models and a group-theoretic object. (See Lemma \ref{E:modelssections}.) This allows us
to prove the first main theorem of this paper in Section \ref{E:minimalformodel}, namely Theorem \ref{E:tech}. This theorem states
that every mere cover model of a $G$-Galois branched cover has a unique minimal field where its automorphisms are defined, and this field of
definition has
special properties. This theorem has several noteworthy corollaries. Among them, it follows that for every $G$-Galois branched cover of
$\p_{\bar \q}$ there is a field of definition that is Galois over the field of moduli, with Galois group a subgroup of $\oper{Aut}(G)$.
(See Corollary \ref{E:thebigone}.) In particular, there is always a ``small'' field of definition over the field of moduli. Finally, in
Section
\ref{E:adjoin} we construct a special field of definition (infinite over the field of moduli) for every $G$-Galois branched cover, resulting
from adjoining certain elements to
the field of moduli. (See Theorem \ref{E:abelian}.) This, together with results from a previous paper (\cite{me}), allow us to prove
several corollaries (gathered in Corollary \ref{E:coolerigp}). For example, it allows us to prove that for every finite group $G$, there
is an extension of number
fields $\q\subset E\subset F$ such that $F/E$ is $G$-Galois, and $E/\q$
ramifies only over those primes that divide $|G|$. I.e., $G$ is realizable over a field that is
``close'' to $\q$.

This paper is based in large part on portions of the author's doctoral thesis, written at the University of Pennsylvania under the
supervision of David
Harbater.

\section{Introduction and Definitions} \label{E:introduction}
\begin{ntt}\rm
 Given an integral scheme $S$, we write $\kappa(S)$ for its function field.
\end{ntt}

\begin{dff} \label{E:defofgalois}\rm
Let $K$ be a field, and let $X_K$ and $Y_K$ be connected, normal, complete curves over $K$. We say that a map $X_K\rightarrow Y_K$ of
$K$-curves is a \it branched
cover \rm (or simply a cover) if the map is finite and generically \'etale. We say  that a
branched cover is \it Galois \rm if the induced extension of
function fields $\kappa(X_K)/\kappa(Y_K)$ is a Galois extension of fields. We
sometimes refer to branched covers as
\it mere covers\rm.

Let $G$ be a finite group. A \it $G$-Galois branched cover \rm is a branched cover
$X_K\rightarrow Y_K$ which is Galois, together with an isomorphism of $\oper{Gal}(\kappa(X_K)/\kappa(Y_K))$
with $G$.
\end{dff}
\begin{dff}\rm 
 Let $G$ be a finite group, and let $X_{\bar \q}\rightarrow Y_{\bar \q}$ be a $G$-Galois branched cover of curves over $\bar \q$. We say
that $K\subset \bar \q$ is a \it field of definition of $X_{\bar \q}\rightarrow Y_{\bar \q}$ as a mere cover \rm if it descends to a map of
$K$-curves $X_K\rightarrow Y_K$. (Any such $X_K\rightarrow Y_K$ is called a $K$-model of $X_{\bar \q}\rightarrow Y_{\bar \q}$.) We say
that $K$ is a \it field of definition as a $G$-Galois branched cover \rm if $X_{\bar \q}\rightarrow Y_{\bar \q}$ has a $K$-model that is
Galois.
\end{dff}
Let $G$ be a finite group. In this paper we will be interested in $G$-Galois branched covers $X_{\bar \q}\rightarrow\p_{\bar \q}$ of the
projective line. Such covers have a special importance in Galois Theory. Namely, if a number field $K$
is a field of definition of $X_{\bar
\q}\rightarrow\p_{\bar \q}$ as a $G$-Galois branched cover then Hilbert's Irreducibility Theorem (\cite{fj}, Chapter 11) implies that $G$ is
the Galois group of a
Galois field extension of $K$. In particular, if for every finite group $G$ there is a $G$-Galois branched cover $X_{\bar
\q}\rightarrow\p_{\bar \q}$ that descends to $\q$ (as a $G$-Galois branched cover) then the answer to the Inverse Galois Problem is
affirmative.
\begin{rmk}\label{E:riemann}\rm 
Let $a_1,...,a_r$ be closed points of $\p_{\co}$. Riemann's Existence Theorem (see \cite{sgaone}, expos\'e XII) states that every
topological covering space of $\p_{\co}\smallsetminus\{a_1,...,a_r\}$ is defined by polynomials. It follows that there is an
equivalence of
categories between $G$-Galois branched covers of $\p_{\co}\smallsetminus\{a_1,...,a_r\}$ that are \'etale and principal $G$-bundles of the
induced
topological space. Since the (topological) fundamental group of the Riemann Sphere punctured at $r$ points is free with $r-1$ generators, it
follows that it has a principal $G$-bundle for every finite group $G$ that is generated by $r-1$ elements. In particular it implies that for
every finite group $G$ there exists a $G$-Galois branched cover of $\p_{\co}$.
In fact, if we choose $a_1,...,a_r$ so that they come from closed points of $\p_{\bar \q}$ it follows from an argument of Grothendieck
that the cover descends to $\bar \q$.
Therefore, for every finite group $G$ there exists a $G$-Galois branched cover of $\p_{\bar \q}$. However, since the proof of Riemann's
Existence Theorem is not constructive, very little is known about the fields of definition of these covers.
\end{rmk}
Previous work on the structure of fields of definition of $G$-Galois
branched covers (resp. mere covers) has concentrated on the ``field of moduli''. The field of moduli is a field naturally
associated
to a $G$-Galois branched cover (resp. mere cover), and is the best candidate for the smallest field of definition (if one exists). 

\begin{dff}\label{E:defoffom}
 \rm 
 Let $G$ be a finite group, and let $X_{\bar \q}\rightarrow Y_{\bar \q}$ and $X'_{\bar \q}\rightarrow Y_{\bar \q}$ be $G$-Galois branched
covers of $Y_{\bar \q}$. We say that they are \it isomorphic as mere covers \rm if there exists an isomorphism $\eta$ that makes the
following commute:

$$\begindc{\commdiag}[45]
\obj(0,1)[do]{$X_{\bar \q}$}
\obj(1,0)[dt]{$Y_{\bar \q}$}
\obj(2,1)[df]{$X'_{\bar \q}$}
\mor{do}{dt}{}
\mor{do}{df}{$\eta$}
\mor{df}{dt}{}
\enddc$$

If $\eta$ commutes with the given isomorphisms of $\oper{Gal}(\kappa(X_{\bar \q})/\kappa(Y_{\bar \q}))$ and $\oper{Gal}(\kappa(X'_{\bar
\q})/\kappa(Y_{\bar \q}))$ with $G$, we say that $X_{\bar \q}\rightarrow Y_{\bar \q}$ and $X'_{\bar \q}\rightarrow Y_{\bar \q}$ are \it
isomorphic as $G$-Galois 
branched covers\rm.
 
 Let $X_{\bar \q}\rightarrow Y_{\bar \q}$ be a $G$-Galois branched cover of curves 
over a field $\bar \q$. Let $K$ be a subfield of $\bar \q$. The \it field of moduli \rm of
$X_{\bar \q}\rightarrow Y_{\bar \q}$ as a $G$-Galois branched cover (resp. mere cover) relative to
$K$ is the subfield of $\bar \q$ fixed by those automorphisms of $\oper{Gal}(\bar \q/K)$
that take the $G$-Galois branched
cover (resp. mere cover) to an isomorphic copy of itself. We will use the convention that the field of moduli is always taken relative to
$\q$,
unless otherwise stated.
\end{dff}
Let $G$ be a finite group, and let $X_{\bar \q}\rightarrow \p_{\bar \q}$ be a $G$-Galois branched cover. It is clear that the field of
moduli of $X_{\bar \q}\rightarrow \p_{\bar \q}$ as a $G$-Galois branched cover (resp. mere cover) is contained in all of its
fields of definition as a $G$-Galois branched cover (resp. mere cover).

David Harbater and Kevin Coombes have proven in \cite{ch} that the field of moduli of $X_{\bar \q}\rightarrow \p_{\bar \q}$, considered as a
$G$-Galois branched cover (resp. mere cover) is in fact equal to the intersection of all of its fields of definition as a $G$-Galois
branched cover (resp. mere cover). Furthermore, the field of moduli of $X_{\bar \q}\rightarrow \p_{\bar \q}$ as a mere cover is a field
of definition as a
mere cover, and therefore the unique minimal field of definition as a mere cover.

It is important to note that the field of moduli
of a $G$-Galois branched cover of the projective line is not necessarily a field of definition as a $G$-Galois branched cover. In other
words, a $G$-Galois
branched cover may not have a \it unique \rm minimal field of definition. The obstruction for the field of moduli $M$ of a $G$-Galois
branched cover to being a field of definition (as a $G$-Galois branched cover) lies in $H^2(M, Z(G))$. (See \cite{belyi}, \cite{groupes}
and \cite{padiccovs}. The reader may also wish to consult \cite{descent}.) In particular,
if $G$ is centerless or if $M$ has cohomological dimension $1$ it follows that the field of moduli is a field of definition. In
\cite{wewfield}
Stefan Wewers has explored this obstruction in detail.

\section{Mere Cover Models and Sections}\label{E:meresection}
Let $G$ be a finite group, and let $X_{\bar
\q}\rightarrow Y_{\bar \q}$ a
$G$-Galois branched cover of normal complete curves over $\bar \q$. Let $L$ be a field of definition of $X_{\bar \q}\rightarrow Y_{\bar \q}$
as
a mere
cover, and let $Y_L$ be an $L$-model of $Y_{\bar \q}$. Let $\Omega$ be the set of mere cover
models $X_L\rightarrow
Y_L$ of $X_{\bar \q}
\rightarrow Y_{\bar \q}$ over $L$ that lie above $Y_L$. The goal of this section is to give a bijection between $\Omega$ and the set of
sections of some epimorphism of pro-finite groups. In order to do that, we require some notation.

We have following diagram of fields:

$$\begindc{\commdiag}[25]
\obj(0,3)[do]{$\kappa(Y_{\bar \q})$}
\obj(0,1)[dt]{$\bar \q$}
\obj(2,2)[df]{$\kappa(Y_L)$}
\obj(2,0)[zz]{$L$}
\obj(0,5)[za]{$\kappa(X_{\bar \q})$}
\mor{do}{dt}{}[\atright,\solidline]
\mor{do}{df}{}[\atright,\solidline]
\mor{df}{zz}{}[\atright,\solidline]
\mor{zz}{dt}{}[\atright,\solidline]
\mor{do}{za}{$G$}[\atleft,\solidline]
\mor{za}{df}{}[\atright,\solidline]
\enddc$$

Since we assumed $L$ is a field of definition as a mere cover, Lemma 2.4 in \cite{syb2} (see also \cite{matzat}) implies that
$\kappa(X_{\bar \q})$ is
Galois over $\kappa(Y_L)$. 

We have a short exact sequence:
$$1\rightarrow G \rightarrow \oper{Gal}(\kappa(X_{\bar \q})/\kappa(Y_L))\rightarrow \oper{Gal}(\kappa(Y_{\bar \q})/\kappa(Y_L))\rightarrow
1$$

Let $\oper{Gal}(L)$ denote the absolute Galois group $\oper{Gal}(\bar \q/L)$. Let $f:\oper{Gal}(\kappa(X_{\bar
\q})/\kappa(Y_L))\twoheadrightarrow \oper{Gal}(L)$ be the
composition of the quotient map $\oper{Gal}(\kappa(X_{\bar
\q})/\kappa(Y_L))\twoheadrightarrow \oper{Gal}(\kappa(Y_{\bar \q})/\kappa(Y_L))$ with the isomorphism $\oper{Gal}(\kappa(Y_{\bar
\q})/\kappa(Y_L))\iso \oper{Gal}(L)$. In other words, the map $f$ takes an automorphism $\sigma$ in $\oper{Gal}(\kappa(X_{\bar
\q})/\kappa(Y_L))$ to the restriction $\sigma|_{\bar \q}$ of $\sigma$ to $\bar \q$. We get the following short exact sequence.
$$1\rightarrow G \rightarrow \oper{Gal}(\kappa(X_{\bar \q})/\kappa(Y_L))\xrightarrow[]{f} \oper{Gal}(L)\rightarrow 1$$

Let $\oper{Sec}(f)$ denote the set of sections of $f$ in the category of pro-finite groups.

Let $X_L\rightarrow Y_L$ in $\Omega$ be a mere cover model of $X_{\bar \q}\rightarrow Y_{\bar \q}$. Note that $\kappa(X_{\bar \q})$ is
naturally isomorphic to the tensor product $\bar \q\otimes_L\kappa(X_L)$. We denote by $\omega_{X_L/Y_L}:\oper{Gal}(L)\rightarrow
\oper{Gal}(\kappa(X_{\bar \q})/\kappa(Y_L))$ the map taking $\sigma$ to $\sigma\otimes
\oper{id}_{\kappa(X_L)}$.

\begin{lem} \label{E:modelssections}

  In the above situation, the following
hold:
\begin{enumerate}
 \item Let $\alpha:\Omega\rightarrow\oper{Sec}(f)$ be the map taking a mere cover model $X_L\rightarrow Y_L$ to $\omega_{X_L/Y_L}$. Then
$\alpha$ is a bijection.
 \item Let $X_L\rightarrow
Y_L$ be a mere cover model of $X_{\bar \q}\rightarrow Y_{\bar \q}$. Then $X_L\rightarrow Y_L$ is Galois if and only if the image of
$w_{X_L/Y_L}$ commutes
with $G$.
 
\end{enumerate}

\end{lem}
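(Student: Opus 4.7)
My plan is to prove this by standard Galois descent: the section $\omega_{X_L/Y_L}$ records exactly which subgroup of $\oper{Gal}(\kappa(X_{\bar\q})/\kappa(Y_L))$ fixes the $L$-form $\kappa(X_L)\subset \kappa(X_{\bar\q})$, and conversely an arbitrary section cuts out such an $L$-form as its fixed field. I will construct an explicit inverse $\beta:\oper{Sec}(f)\to\Omega$ and verify the two compositions are the identity, then deduce (2) from a purely group-theoretic manipulation using the fact that a section splits the short exact sequence.

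For the forward direction, given $X_L\to Y_L$ in $\Omega$, I need to check that $\omega_{X_L/Y_L}:\sigma\mapsto\sigma\otimes\oper{id}_{\kappa(X_L)}$ is a well-defined continuous group homomorphism (clear from the tensor product description) and that $f\circ\omega_{X_L/Y_L}=\oper{id}$ (immediate from the fact that $\sigma\otimes\oper{id}$ restricts to $\sigma$ on $\bar\q$). For the inverse $\beta$, given a section $s$, I set $K:=\kappa(X_{\bar\q})^{s(\oper{Gal}(L))}$. Because $s$ splits the short exact sequence, $G\cdot s(\oper{Gal}(L))=\oper{Gal}(\kappa(X_{\bar\q})/\kappa(Y_L))$ and $G\cap s(\oper{Gal}(L))=\{1\}$; by Galois theory this gives $[K:\kappa(Y_L)]=|G|$, $K\cdot\kappa(Y_{\bar\q})=\kappa(X_{\bar\q})$, and $K\cap\kappa(Y_{\bar\q})=\kappa(Y_L)$. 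In particular $K\cap\bar\q=L$, so $L$ is algebraically closed in $K$, and $X_L:=$ normalization of $Y_L$ in $K$ is a geometrically integral normal complete $L$-curve. A direct degree and compositum count then shows that the canonical map $K\otimes_L\bar\q\to\kappa(X_{\bar\q})$ is an isomorphism, so $X_L\times_L\bar\q\cong X_{\bar\q}$ over $Y_{\bar\q}$, placing $X_L\to Y_L$ in $\Omega$. Both compositions $\alpha\circ\beta$ and $\beta\circ\alpha$ then reduce to the tautology that, under the embedding $\kappa(X_L)\hookrightarrow\kappa(X_{\bar\q})$, the subgroup $\{\sigma\otimes\oper{id}\}$ is precisely the pointwise stabilizer of $\kappa(X_L)$.

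For (2), the cover $X_L\to Y_L$ is Galois iff $\kappa(X_L)/\kappa(Y_L)$ is Galois iff $s(\oper{Gal}(L))$ is a normal subgroup of the full Galois group $\oper{Gal}(\kappa(X_{\bar\q})/\kappa(Y_L))$. Since this full group equals $G\cdot s(\oper{Gal}(L))$ and $s(\oper{Gal}(L))$ self-normalizes, normality is equivalent to $G$ normalizing $s(\oper{Gal}(L))$. For any $g\in G$ and $h\in s(\oper{Gal}(L))$, the commutator $ghg^{-1}h^{-1}$ lies in $G=\ker f$ (apply $f$), and if $G$ normalizes $s(\oper{Gal}(L))$ it also lies in $s(\oper{Gal}(L))$; the trivial intersection then forces $ghg^{-1}h^{-1}=1$. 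Thus normalization is equivalent to commutation, which is the stated criterion.

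The main technical obstacle is the descent step: verifying that the subfield $K$ really comes from an $L$-model, i.e., that $X_L\times_L\bar\q\cong X_{\bar\q}$ as covers of $Y_{\bar\q}$ and not merely that the function fields match. Everything hinges on $L$ being algebraically closed in $K$ (to guarantee that $X_L$ is geometrically integral so base change does not introduce new components) and on a degree count forcing the natural map on function fields to be an isomorphism; once these are in place the rest is bookkeeping about the Galois correspondence for the short exact sequence split by $s$.
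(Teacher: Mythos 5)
Your proposal is correct and follows essentially the same route as the paper: the inverse to $\alpha$ is obtained by taking the fixed field of $s(\oper{Gal}(L))$, checking via the splitting and a degree count that $L$ is algebraically closed in it so that it genuinely descends the cover, and part (2) reduces via the Galois correspondence to normality of $s(\oper{Gal}(L))$ in the semidirect product, which is equivalent to elementwise commutation with $G$. Your commutator computation just makes explicit the step the paper asserts from the semidirect product structure.
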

\begin{proof}

In order to prove that $\alpha$ is onto, we first prove that for every section $s\in \oper{Sec}(f)$, the field $L$ is algebraically closed
in $\kappa(X_{\bar \q})^{s(\oper{Gal}(L))}$. It is straightforward to see that the
natural map $\oper{Ker}(f)\rightarrow\oper{Gal}(\kappa(X_{\bar \q})/\kappa(Y_L))/s(Gal(L))$ is a bijection of sets. Therefore the field
$\kappa(X_{\bar
\q})^{s(\oper{Gal}(L))}$ has
degree $|\oper{Ker}(f)|=|G|$ over $\kappa(Y_L)$. This implies that $\kappa(Y_{\bar \q})$ is linearly disjoint from $\kappa(X_{\bar
\q})^{s(\oper{Gal}(L))}$ over $\kappa(Y_L)$, and therefore $L$ is algebraically closed in $\kappa(X_{\bar
\q})^{s(\oper{Gal}(L))}$.

It follows from the above that there is a mere cover model
$X_{L,s}\rightarrow Y_L$ that induces the field extension $\kappa(X_{\bar \q})^{s(\oper{Gal}(L))}/\kappa(Y_L)$, and that
the field $\kappa(X_{\bar \q})$ is equal to the the compositum $\bar \q\cdot \kappa(X_{L,s})$. Let $\sigma$ be an element of
$\oper{Gal}(L)$. Since both $s(\sigma)$ and $w_{X_{L,s}/Y_L}(\sigma)$ restrict to $\sigma$ on $\bar \q$, and restrict to the trivial
automorphism on $\kappa(X_{L,s})$, it follows that $s(\sigma)$ is equal to $w_{X_{L,s}/Y_L}(\sigma)$. In
other words, $\alpha$ is onto.

In order to finish the proof of Claim (1) of this lemma, it remains to prove that $\alpha$ is injective. Let $X_L\rightarrow Y_L$ be
an element of $\Omega$. As we have seen above, the field extension\\ $\kappa(X_{\bar \q})^{w_{X_L/Y_L}(\oper{Gal}(L))}/\kappa(Y_L)$ has
degree $|G|$. It is
clear by the definition
of $w_{X_L/Y_L}$ that
$\kappa(X_L)$ is contained in $\kappa(X_{\bar \q})^{w_{X_L/Y_L}(\oper{Gal}(L))}$. Since $\kappa(X_L)$ also has degree $|G|$ over
$\kappa(Y_L)$ it follows that $[\kappa(X_{
\bar \q})^{s(\oper{Gal}(L))}:\kappa(X_L)]=1$, and they are equal. In other words you can recover the mere-cover model $X_L\rightarrow
Y_L$ from its induced section. This concludes
the proof of Claim (1) of the lemma.

It remains to prove Claim (2) of this lemma, i.e.\ that given a mere cover model $X_L\rightarrow Y_L$ the group
$w_{X_L/Y_L}(\oper{Gal}(L))$ commutes
with $G$ if and only if $X_L\rightarrow Y_L$ is Galois. As we have seen above $\kappa(X_L)$ is equal to $\kappa(X_{\bar
\q})^{w_{X_L/Y_L}(\oper{Gal}(L))}$. Therefore, by Galois Theory, the cover $X_L\rightarrow Y_L$ is Galois exactly when
$w_{X_L/Y_L}(\oper{Gal}(L))$ is normal in $\oper{Gal}(\kappa(X_{\bar \q})/\kappa(Y_L))$. Since $\oper{Gal}(\kappa(X_{\bar
\q})/\kappa(Y_L))$ is the semi-direct product of $G$ and $\oper{Gal}(\kappa(X_{\bar \q})/\kappa(Y_L))$, this is equivalent to
$w_{X_L/Y_L}(\oper{Gal}(L))$ commuting with $G$.
\end{proof}
\begin{rmk}\label{E:functorialconstruction}\rm
The construction of $\alpha$ is functorial in the following sense. Let $E$ be an overfield of $L$ that is contained in $\bar \q$, and let
$Y_E$ be $Y_L\times_LE$. Let $\Omega_E$ be the set of mere cover models $X_E\rightarrow Y_E$ of $X_{\bar \q}\rightarrow Y_{\bar \q}$ lying
above $Y_E$. Let $\alpha'$ be the bijection between $\Omega_E$ and $\oper{Sec}(g)$ where $g:\oper{Gal}(\kappa(X_{\bar
\q})/\kappa(Y_E))\twoheadrightarrow \oper{Gal}(E)$ is the composition of the quotient map $\oper{Gal}(\kappa(X_{\bar
\q})/\kappa(Y_E))\twoheadrightarrow \oper{Gal}(\kappa(Y_{\bar \q})/\kappa(Y_E))$ with the isomorphism $\oper{Gal}(\kappa(Y_{\bar
\q})/\kappa(Y_E))\iso \oper{Gal}(E)$. Then $\alpha'(X_L\times_LE\rightarrow Y_E)$ is the
restriction of $\alpha(X_L\rightarrow Y_L)$ to $\oper{Gal}(E)$.
\end{rmk}

\section{Minimal Fields of Definition of a Given Model}\label{E:minimalformodel}

The main theorem (Theorem \ref{E:tech}) of this section states that every mere cover model of a $G$-Galois branched
cover has a unique minimal field of definition that makes it Galois, and explores the special properties of this field. This result is
somewhat surprising, since it is well known that if you do not fix the mere cover model there may not be a unique minimal field of
definition for the automorphisms. (See Remark \ref{E:explanation} for further discussion.)

In order
to prove Theorem \ref{E:tech} we require a group-theoretic lemma (Lemma \ref{E:group}).
\begin{ntt}
\rm Let $g$ and $h$ be elements in a group $G$. We use the notation $^hg$ to mean the conjugation $hgh^{-1}$.
\end{ntt}
\begin{lem} \label{E:group}
 Let $J$ and $M$ be groups, and let $I$ be a semi-direct product
$J\rtimes M$. Let $N$
be $M\cap C_I(J)$, where $C_I(J)$ is the centralizer of $J$ in $I$. Then the following hold:
\begin{enumerate}
 \item $N$ is normal in $I$.
 \item Let $\gamma:M/N\rightarrow \oper{Aut}(J)$ be defined by taking $mN$
to the automorphism $j\mapsto \,^mj$. Then $\gamma$ is well defined, and injective.
 \item $I/N$ is isomorphic to  the semi-direct product $J\rtimes_{\gamma}
(M/N)$.
\end{enumerate}
\end{lem}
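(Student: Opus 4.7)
The plan is to verify each of the three claims directly from the definitions, using only the basic structure of the semi-direct product $I = J \rtimes M$ (so every element of $I$ has a unique expression $jm$ with $j \in J$, $m \in M$, and $M$ normalizes $J$).

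For claim (1), I would show normality in $I$ by handling conjugation by elements of $J$ and $M$ separately. If $n \in N$ and $j \in J$, then $n$ commutes with $j$ by the definition of $N \subseteq C_I(J)$, so $jnj^{-1} = n \in N$. If $n \in N$ and $m \in M$, then certainly $mnm^{-1} \in M$; and for any $j \in J$, writing $j' = m^{-1}jm \in J$ (using that $M$ normalizes $J$), one has $(mnm^{-1})j(mnm^{-1})^{-1} = m(nj'n^{-1})m^{-1} = mj'm^{-1} = j$, so $mnm^{-1}$ centralizes $J$, i.e.\ lies in $N$. Since $I$ is generated by $J$ and $M$, this suffices.

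For claim (2), well-definedness of $\gamma$ is immediate: if $mN = m'N$ then $m^{-1}m' \in N$ centralizes $J$, so the inner automorphisms induced by $m$ and $m'$ on $J$ agree; and $j \mapsto {}^mj$ is an automorphism of $J$ because $M$ normalizes $J$. Injectivity is also immediate: if ${}^mj = j$ for every $j \in J$ then $m \in C_I(J) \cap M = N$.

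For claim (3), I would write down the obvious candidate homomorphism $\varphi : I \to J \rtimes_\gamma (M/N)$ by $jm \mapsto (j, mN)$ and check that it respects the multiplication rule: $(jm)(j'm') = j({}^mj')(mm')$ maps to $(j \cdot \gamma(mN)(j'), mm'N)$, which is exactly the product in the twisted semi-direct product. The kernel of $\varphi$ is the set of $jm$ with $j=1$ and $m \in N$, i.e.\ $N$ itself, and $\varphi$ is plainly surjective, giving the claimed isomorphism.

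None of the three steps looks genuinely difficult; if there is a mild trap it is in claim (1), where one must be careful that $N$ is defined as $M \cap C_I(J)$ rather than $C_M(J)$ and must verify normality against $J$-conjugation as well as $M$-conjugation. Everything else is routine bookkeeping in a semi-direct product.
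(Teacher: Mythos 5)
Your proposal is correct and follows essentially the same route as the paper: both reduce normality of $N$ to checking conjugation by $J$ (where centralizing makes it trivial) and by $M$ separately, both get well-definedness and injectivity of $\gamma$ directly from the definition of $N$, and both exhibit the isomorphism in (3) via the map $jm\mapsto (j,mN)$. The only cosmetic difference is in (1), where the paper first observes that $C_I(J)$ is normal in $I$ (hence $N$ is normal in $M$) while you verify $mnm^{-1}\in C_I(J)$ by direct computation; the content is the same.
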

\begin{proof}
 Since $J$ is normal in $I$, it follows that so is $C_I(J)$. Therefore $N$ is normal in $M$. In order to
show that $N$ is
normal in $I$ it suffices to prove for every $n$ in $N$, $j$ in $J$,
and $m$ in $M$ that $^{jm}n$ is in $N$. Since $N$
is normal in $M$, $^mn$ is an element of $N$. Since $J$
commutes with $N$ it follows that $^{jm}n=\,^j(^mn)=\,^mn$. It is now clear
that $^{jm}n$ is in $N$, and therefore (1) is proven.

The homomorphism $\gamma$ is well defined because $N$ commutes with $J$. It
remains to show that $\gamma$ is injective. Indeed if
$\gamma(mN)=id$ then for every $j\in J$, we have
$^mj=j$. Therefore $m$ commutes with $J$. Since
$m$ is also in $M$, we conclude that it is in $N$. Therefore
$mN=N$. This proves (2).

It is now an easy verification that the map $I=J\rtimes M\rightarrow
J\rtimes_{\gamma}(M/N)$ taking $jm$, where $j\in J$ and $m\in M$, to $(j,mN)$ is a well-defined homomorphism with kernel $N$, proving (3).
\end{proof}

We are now ready for the main theorem of this section:

\begin{thm} \label{E:tech}
Let $G$ be a finite group, and let $X_{\bar \q}\rightarrow \p_{\bar \q}$ be a $G$-Galois
branched cover that descends as a mere cover to a number field $L$. Let
$X_L\rightarrow
\p_L$ be a model of it over $L$, and let $\mathcal{A}$ be the set of all
overfields $E$ of $L$ such that $X_L\times_L E \rightarrow \p_E$ is Galois. Then there is a field $E$ in $\mathcal{A}$ that is
contained in all of the other fields in $\mathcal{A}$,
and it satisfies the following properties:
\begin{enumerate}
\item The field extension $E/L$ is Galois, with Galois group isomorphic to a
subgroup $H$ of $\oper{Aut}(G)$.
\item For every $G$-Galois field extension $F/E$ coming from specializing the
$G$-Galois branched cover $X_L\times_L E\rightarrow \p_E$ at an $E$-rational point, the field extension $F/L$ is Galois
with Galois group isomorphic to $G\rtimes H$ (where $\oper{Gal}(F/E)\cong G$ is the
obvious subgroup of $G\rtimes H$, and where the action of $H$ on $G$ is
given by the embedding of $H$ in $\oper{Aut}(G)$).
\end{enumerate}
\end{thm}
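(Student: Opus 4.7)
The plan is to convert Theorem \ref{E:tech} into pure group theory using the bijection of Lemma \ref{E:modelssections}, then read off both the minimal field and its properties from Lemma \ref{E:group}.

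First I would package the data of $X_L$ as the section $s := \omega_{X_L/\p_L}$ of $f$, so that $I := \oper{Gal}(\kappa(X_{\bar \q})/\kappa(\p_L))$ decomposes as the internal semidirect product $G \rtimes M$ with $M := s(\oper{Gal}(L))$. By Remark \ref{E:functorialconstruction}, base-changing $X_L$ to an overfield $E$ corresponds to restricting $s$ to $\oper{Gal}(E)$; combining this with Lemma \ref{E:modelssections}(2), $E \in \mathcal{A}$ precisely when $s(\oper{Gal}(E))$ lies in $C_I(G)$, that is, in $N := M \cap C_I(G)$. Now Lemma \ref{E:group} does the work: part (1) of that lemma gives that $N$ is normal in $I$, so $s^{-1}(N)$ is normal in $\oper{Gal}(L)$, and thus $E^* := \bar \q^{s^{-1}(N)}$ is Galois over $L$ and contained in every other $E \in \mathcal{A}$. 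Part (2) of Lemma \ref{E:group} identifies $\oper{Gal}(E^*/L) \cong M/N$ with a subgroup $H$ of $\oper{Aut}(G)$, establishing assertion (1) of the theorem.

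For (2), the proof of Lemma \ref{E:modelssections}(1) identifies $\kappa(X_L \times_L E^*) = \kappa(X_{\bar \q})^N$; since $N$ is normal in $I$, this is Galois over $\kappa(\p_L)$, and Lemma \ref{E:group}(3) identifies the Galois group with $G \rtimes H$, where $H$ acts via the embedding into $\oper{Aut}(G)$ from part (1). So the composition $X_L \times_L E^* \to \p_L$ is itself a $G \rtimes H$-Galois cover of $L$-curves, with the subgroup $G$ being exactly $\oper{Gal}(\kappa(X_L \times_L E^*)/\kappa(\p_{E^*}))$. Given a specialization yielding a $G$-Galois field extension $F/E^*$, the field $F$ is the residue field at a closed point of $X_L \times_L E^*$ whose decomposition group inside $G \rtimes H$ necessarily contains $G$. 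To conclude (2), one identifies $F$ as the residue field produced by specializing the $G \rtimes H$-Galois cover $X_L \times_L E^* \to \p_L$ at the corresponding point of $\p_L$, from which $F/L$ is Galois with group $G \rtimes H$ and the structure described.

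The routine part is the first half: translating the descent data into the centralizer $N$ and invoking Lemma \ref{E:group}. The step that requires care is the concluding claim of (2). There one must ensure that the decomposition group at the chosen closed point is the full $G \rtimes H$, which at Hilbert-generic specialization points is automatic, and then check that the $H$-action on $G$ inherited from the semidirect decomposition agrees with the embedding $H \hookrightarrow \oper{Aut}(G)$ identified above; this last compatibility is precisely the content of Lemma \ref{E:group}(3).
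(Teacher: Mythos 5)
Your proposal is correct and follows essentially the same route as the paper: it translates the model into a section $s$ via Lemma \ref{E:modelssections}, sets $N=V:=s(\oper{Gal}(L))\cap C_I(G)$, and applies Lemma \ref{E:group} to obtain the minimal field (the paper realizes it as the fixed field of $GV$, you equivalently as $\bar\q^{\,s^{-1}(N)}$), the embedding $\oper{Gal}(E/L)\hookrightarrow\oper{Aut}(G)$, and the $G\rtimes H$ structure on $\kappa(X_L\times_LE)/L(x)$ before specializing. Your discussion of the decomposition group in part (2) is in fact more detailed than the paper's one-line ``proven by specializing.''
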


\begin{proof}

Let $L(x)$ be the function field of $\p_L$, where $x$ is a transcendental
element. By Lemma 2.4 in
\cite{syb2}, $\kappa(X_{\bar \q})$ is Galois over $L(x)$. Let $s:\oper{Gal}(L)\rightarrow \oper{Gal}(\kappa(X_{\bar
\q})/L(x))$ be the section corresponding to $X_L\rightarrow \p_L$ via the bijection $\alpha$ from Lemma
\ref{E:modelssections}. 

Let $V$ be the intersection of
$s(\oper{Gal}(L))$ with
the centralizer of $G$ in $\oper{Gal}(\kappa(X_{\bar \q})/L(x))$. 
Applying Lemma \ref{E:group} with $G$ in the role of $J$, $s(\oper{Gal}(L))$
in the role of $M$, $V$ in the role of $N$, and
$\oper{Gal}(\kappa(X_{\bar \q})/L(x))$ in the role of $I$, we see that $V$ is normal in $\oper{Gal}(\kappa(X_{\bar \q})/L(x))$, and that
$\oper{Gal}(\kappa(X_{\bar \q})/L(x))/V$ is isomorphic to a semi direct product of $G$ with a subgroup of $\oper{Aut}(G)$. In particular,
the
group $V$ is has finite index in 
$\oper{Gal}(\kappa(X_{\bar \q})/L(x))$, and therefore the compositum $GV$ is an
open subgroup of $\oper{Gal}(\kappa(X_{\bar \q})/L(x))$ containing $G$. Therefore there exists a finite field extension $E$ of $L$,
contained in
$\bar \q$, such
that the
fixed
subfield of $\kappa(X_{\bar \q})$ by $GV$ is equal to $E(x)$. Note that
$\kappa(X_L\times_LE)$ is
the fixed subfield of $\kappa(X_{\bar \q})$ by $V$.

We first show that $E$ is an element of $\mathcal{A}$, and in fact the least
element (i.e. $\forall E'\in \mathcal{A}\,\,\,E\subseteq E'$). By Lemma
\ref{E:modelssections} and Remark \ref{E:functorialconstruction}, the map $X_L\times_LE\rightarrow \p_E$ is Galois because
the image of the restriction of $s$ to $\oper{Gal}(E)$ commutes
with $G$. If $E'$ is another element of $\mathcal{A}$, then again by Lemma
\ref{E:modelssections} and Remark \ref{E:functorialconstruction} the image of the restriction of $s$
to $\oper{Gal}(E')$ commutes with $G$. But this implies that $\oper{Gal}(\kappa(X_{\bar
L})/E'(x))$ is contained in $GV$. This proves that $E$ is the least element in
$\mathcal{A}$.

The group $\oper{Gal}(E/L)\cong\oper{Gal}(E(x)/L(x))$ is isomorphic $s(\oper{Gal}(L))/V$ by the second isomorphism theorem. It follows from
the above that
$\oper{Gal}(E/L)$ embeds into $\oper{Aut}(G)$. This proves Claim (1) of Theorem \ref{E:tech}. 

Claim (3) of Lemma \ref{E:group}, applied to
our situation as above, implies that
the field extension $\kappa(X_L\times_LE)/L$ is Galois
with Galois group isomorphic to $G\rtimes H$ (where the action of $H$ on $G$ is
given by the embedding of $H$ in $\oper{Aut}(G)$); and that furthermore, we have
$\kappa(X_L\times_LE)^G=E(x)$. Claim (2) in Theorem \ref{E:tech} is now proven by
specializing.

\end{proof}

\begin{rmk}\label{E:explanation}
\rm
Let $X_{\bar \q} \rightarrow \p_{\bar \q}$ be a $G$-Galois branched cover with field of moduli $M$. Recall that the field $M$ is the
intersection of all of the fields of definition of $X_{\bar \q} \rightarrow \p_{\bar \q}$ as a $G$-Galois branched cover, but is not
necessarily one itself. However, since $M$ contains the field of moduli
of $X_{\bar \q} \rightarrow \p_{\bar \q}$ as a mere cover, it is a field of definition as a mere cover. (See Section \ref{E:introduction}.)
In light of Theorem \ref{E:tech}, one can explain the failure of $M$ to be a field of definition as a $G$-Galois branched cover as the
combination of two factors:
\begin{enumerate}
 \item Theorem \ref{E:tech} gives a unique minimal field of definition as a $G$-Galois branched cover for any particular mere cover model
$X_M\rightarrow \p_M$. However
each model might give a different minimal field of definition. Therefore the non-uniqueness of a model of $X_{\bar \q} \rightarrow \p_{\bar
\q}$ over
$M$ contributes to the plurality of the minimal fields of definition.
\item If $L$ is an overfield of $M$, then there may be a mere cover model of $X_{\bar \q}\rightarrow \p_{\bar \q}$ over
$L$ that does not descend to a mere cover model over $M$.

\end{enumerate}

\end{rmk}

This theorem has a number of noteworthy corollaries. 

\begin{crl}\label{E:thebigone} Let $G$ be a finite group, and let $X_{\bar \q}\rightarrow \p_{\bar \q}$ be a $G$-Galois branched cover. Then
the following hold:
\begin{enumerate}
 \item Assume $X_{\bar \q}\rightarrow \p_{\bar \q}$ descends as a mere cover to a number field $L$ (i.e., $L$ contains the
field of moduli as a mere cover). Then there exists a field of definition for $X_{\bar \q}\rightarrow \p_{\bar \q}$ as a $G$-Galois branched
cover that is Galois over $L$ with Galois group a subgroup of $\oper{Aut}(G)$. In particular this holds when $L$ is the field of moduli of
$X_{\bar \q}\rightarrow \p_{\bar \q}$ as a $G$-Galois branched cover.
\item Assume $X_{\bar \q}\rightarrow \p_{\bar \q}$ descends as a mere cover to a number field $L$. Then there exists a subgroup  $H\leq
\oper{Aut}(G)$ such that $G\rtimes H$ is realizable as a Galois group over $L$.
\item Let $F$ be the field of
moduli of
$X_{\bar \q}\rightarrow \p_{\bar \q}$ as a mere cover, and let $M$ be the field of moduli of $X_{\bar \q}\rightarrow \p_{\bar \q}$ as a
$G$-Galois branched cover. Then $M$ is Galois over $F$ with Galois group a subquotient of $\oper{Aut}(G)$.

\end{enumerate}

\end{crl}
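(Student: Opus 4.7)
For part (1), the hypothesis directly furnishes a mere cover model $X_L\to\p_L$ (in the special case $L=$ field of moduli as a $G$-Galois branched cover, we first invoke Coombes--Harbater applied to the smaller field of moduli as a mere cover to verify the hypothesis). Applying Theorem \ref{E:tech} yields the minimal $E\in\mathcal{A}$; then $X_L\times_LE\to\p_E$ is Galois, so $E$ is a field of definition as a $G$-Galois branched cover, and $E/L$ is Galois with $\oper{Gal}(E/L)$ a subgroup of $\oper{Aut}(G)$ by Theorem \ref{E:tech}(1).

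For part (2), I would apply part (1) to obtain $E/L$ as above with $\oper{Gal}(E/L)=H\leq\oper{Aut}(G)$. Since $E$ is a number field, Hilbert's Irreducibility Theorem applied to the $G$-Galois branched cover $X_L\times_LE\to\p_E$ provides an $E$-rational point whose specialization yields a $G$-Galois field extension of $E$. Theorem \ref{E:tech}(2) then identifies its Galois group over $L$ as $G\rtimes H$, which is the desired realization.

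For part (3), Coombes--Harbater supplies a mere cover model $X_F\to\p_F$ over $F$; applying part (1) with $L=F$ produces a field of definition $E$ as a $G$-Galois branched cover, with $E/F$ Galois and $\oper{Gal}(E/F)=H\leq\oper{Aut}(G)$. Since $M$ lies in every field of definition as a $G$-Galois branched cover, $M\subseteq E$, so it suffices to show $M/F$ is Galois: then $\oper{Gal}(M/F)$ is automatically a quotient of $H$ and hence a subquotient of $\oper{Aut}(G)$.

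I expect this final step to be the main obstacle, since for a general field of moduli normality over the base is not automatic. My plan is to observe that $\oper{Gal}(\bar\q/F)$, preserving the mere cover's isomorphism class by definition of $F$, acts on the set of $G$-Galois structures on $X_{\bar\q}\to\p_{\bar\q}$; since this set is a torsor under $\oper{Aut}(G)$ (via post-composing the chosen $G$-identification with elements of $\oper{Aut}(G)$), the action defines an anti-homomorphism $\phi\colon\oper{Gal}(\bar\q/F)\to\oper{Aut}(G)$ whose kernel is precisely the stabilizer of the given $G$-structure, namely $\oper{Gal}(\bar\q/M)$. This exhibits $\oper{Gal}(\bar\q/M)$ as normal in $\oper{Gal}(\bar\q/F)$, so $M/F$ is Galois, completing the proof.
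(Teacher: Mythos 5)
Your treatment of parts (1) and (2) matches the paper's: both are read off directly from Theorem \ref{E:tech}, with your explicit appeal to Hilbert irreducibility in (2) being the step the paper leaves implicit, and your reduction of the ``in particular'' clause of (1) to Coombes--Harbater being exactly the content of the parenthetical in the statement. For part (3) you take a genuinely different route to the key point, which both you and the paper correctly identify as the normality of $M$ over $F$. The paper argues by twisting models: for $\sigma\in\oper{Gal}(\bar \q/F)$ and $L$ a field of definition as a $G$-Galois branched cover with Galois model $X_L\rightarrow\p_L$, the twist $X_{\sigma L}\rightarrow\p_{\sigma L}$ is a Galois mere-cover model of the $\sigma$-twist of $X_{\bar \q}\rightarrow\p_{\bar \q}$, which is mere-isomorphic to the original by definition of $F$; hence $\sigma L$ is again a field of definition as a $G$-Galois branched cover, $\oper{Gal}(\bar \q/F)$ permutes the set of such fields, and so fixes their intersection $M$. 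You instead manufacture a homomorphism from $\oper{Gal}(\bar \q/F)$ to an automorphism group with kernel $\oper{Gal}(\bar \q/M)$. This works and in fact yields slightly more (an embedding of $\oper{Gal}(M/F)$ independent of part (1)), but as written it needs one correction: the set of $G$-Galois structures \emph{on the nose} is an $\oper{Aut}(G)$-torsor on which $\oper{Gal}(\bar \q/F)$ does not act canonically, since the mere-cover isomorphism $\eta_\sigma$ identifying the $\sigma$-twist with $X_{\bar \q}\rightarrow\p_{\bar \q}$ is only defined up to the deck group $G$. You must pass to structures modulo $G$-Galois isomorphism, which form a torsor under $\oper{Out}(G)$ rather than $\oper{Aut}(G)$; there the ambiguity in $\eta_\sigma$ acts by inner automorphisms and so disappears, the action is well defined, and the stabilizer of the base point is exactly $\oper{Gal}(\bar \q/M)$ by the definition of the field of moduli. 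The paper's twisting argument buys the same normality with no cocycle bookkeeping; your torsor argument, once repaired, buys the sharper target $\oper{Out}(G)$.
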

\begin{proof}
 Claims (1) and (2) follow immediately from Theorem \ref{E:tech}. In light of Theorem \ref{E:tech}, in order to prove Claim (3) it suffices
to show that $M$ is Galois over $F$. Recall that $M$ is the intersection of all of the fields of definition as a $G$-Galois branched cover.
It therefore suffices to prove that for every field of definition $L$ of $X_{\bar \q}\rightarrow \p_{\bar \q}$ as a $G$-Galois branched
cover, and for every $\sigma$ in $\oper{Gal}(\bar \q/F)$, the field $\sigma L$ is also a field of definition as a $G$-Galois branched cover.
Let $X_L\rightarrow \p_L$ be an $L$-model as a $G$-Galois branched cover, and let $X_{\sigma L}\rightarrow \p_{\sigma L}$ be its twist by
$\sigma$. This cover is clearly Galois. Furthermore, note that $X_{\sigma L}\rightarrow \p_{\sigma L}$ is a mere cover model over $\sigma
L$ of the cover
$X_{\bar \q}\rightarrow \p_{\bar \q}$ after it has been twisted by $\sigma$. By the definition of $F$, the cover resulting from twisting
$X_{\bar \q}\rightarrow \p_{\bar \q}$ by $\sigma$ is isomorphic to $X_{\bar \q}\rightarrow \p_{\bar \q}$ as a mere cover. Therefore $\sigma
L$ is a field of definition of $X_{\bar \q}\rightarrow \p_{\bar \q}$ as a mere cover, and $X_{\sigma L}\rightarrow \p_{\sigma L}$ is a mere
cover model of this cover that is Galois. In other words, the field $\sigma L$ is field of definition of $X_{\bar \q}\rightarrow \p_{\bar
\q}$ as a $G$-Galois branched cover, which is what we wanted to prove.
\end{proof}

\begin{rmk}\rm Note that Claim (3) in Corollary \ref{E:thebigone} implies that there exists a subgroup $H\leq \oper{Aut}(G)$
such that $G\rtimes H$ is a Galois group over $L$ without proving it is
realizable regularly (i.e.\ as the Galois group of a regular extension of
$L(x)$).
\end{rmk}

\section{Adjoining Roots of Unity to a Field of Moduli to get a Field of Definition}\label{E:adjoin}
While Theorem \ref{E:tech} describes a general relationship between the field of
moduli and fields of definition, the main theorem of this section (Theorem
\ref{E:abelian}) describes the existence of a particular field of definition (infinite over the field of moduli)
with special properties.

Let $G$ be a finite group, and let $X_{\bar \q}\rightarrow \p_{\bar \q}$ be a $G$-Galois branched cover. As noted in Section
\ref{E:introduction}, its field of moduli $M$ as a $G$-Galois branched cover may not be a field of definition as a $G$-Galois branched
cover. However, Coombes
and Harbater
(\cite{ch})
have proven that the field $\cup_nM(\zeta_n)$ resulting from adjoining all of the roots of unity to $M$ \it is \rm a field of definition.
(Here $\zeta_n$ is defined to be $e^{\frac{2\pi i}{n}}$.) The following is a
strengthening of this result.
\begin{thm} \label{E:abelian}
In the situation above, the field $\cup_{\{n|\exists
m:\,n|\,|Z(G)|^m\}}M(\zeta_n)$ is a field of definition. In particular, there
exists a field of definition (finite over $\q$) that is ramified over the field of moduli $M$ only
over the primes that divide $|Z(G)|$.
\end{thm}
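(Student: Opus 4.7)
The plan is to appeal to the cohomological obstruction to $M$ being a field of definition as a $G$-Galois cover, which, as recalled in Section \ref{E:introduction} (see \cite{belyi}, \cite{groupes}, \cite{padiccovs}), is a class $o \in H^2(\oper{Gal}(\bar\q/M), Z(G))$. Crucially, $Z(G)$ carries the trivial Galois action here: because $M$ is the field of moduli as a $G$-Galois cover, the action of $\oper{Gal}(\bar\q/M)$ on the $G$-cover structure, and hence on $G$ itself, factors through $\oper{Inn}(G)$, and inner automorphisms fix the center pointwise. Setting $n := |Z(G)|$ and $M' := \bigcup_m M(\zeta_{n^m})$, the goal is to show that the restriction of $o$ to $H^2(\oper{Gal}(\bar\q/M'), Z(G))$ vanishes.

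First I would decompose $Z(G)$ as a direct product of cyclic groups of prime-power order $p^e$ with each $p^e \mid n$; the corresponding direct-sum decomposition of $H^2(-, Z(G))$ reduces us to the case $Z(G) \cong \z/p^e$. Since $M'$ contains $\zeta_{p^e}$, the trivial module $\z/p^e$ becomes isomorphic to $\mu_{p^e}$ over $M'$, and Kummer theory identifies $H^2(M', \z/p^e) \cong H^2(M',\mu_{p^e}) \cong \oper{Br}(M')[p^e]$. The task thus reduces to killing the image of (the relevant component of) $o$ in the Brauer group of $M'$.

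For this I would use the Hasse--Brauer--Noether sequence applied to the finite cyclotomic layers $L_N := M(\zeta_{n^N})$, whose union is $M'$. At each place $v$ of $M$ the local degree $[L_{N,w}:M_v]$ tends to infinity with $N$: for $v \nmid n$ the extension $L_N/M$ is unramified at $v$ and the residue-field degree is the multiplicative order of Frobenius in $(\z/n^N)^\times$, which grows without bound; for $v \mid n$ the cyclotomic ramification index $\phi(n^N)/c_v$ itself grows. Since the local restriction $\oper{Br}(M_v) \to \oper{Br}(L_{N,w})$ is multiplication by this local degree on $\q/\z$, and $o$ has finite order dividing $p^e$ with nonzero local invariants at only finitely many places, for $N$ sufficiently large every local invariant of $o$ is killed. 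Hasse--Brauer--Noether then forces $o$ to vanish in $\oper{Br}(L_N)$, so the cover descends to $L_N$ as a $G$-Galois cover.

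The ``in particular'' statement is immediate: $L_N/M$ is a finite cyclotomic extension, ramified only over primes dividing $n = |Z(G)|$, and it is a field of definition by the preceding argument. The main conceptual step, which I expect to be the principal subtlety, is verifying the obstruction formalism with the correct (trivial) Galois action on $Z(G)$; once that is in place, the Kummer-theoretic translation and the local-degree computation are routine, and the reduction to a finite subextension is automatic because $o$ has finite order and finite support.
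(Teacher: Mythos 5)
Your argument is correct in substance but takes a genuinely different, more hands-on route than the paper. The paper disposes of the centerless case via \cite{ch} and otherwise simply cites Proposition 9 of Chapter II of \cite{serregalois} to conclude that $\oper{cd}_p$ of $\cup M(\zeta_n)$ is at most $1$ for every prime $p$ dividing $|Z(G)|$, whence $H^2(\cdot,Z(G))=0$ and the obstruction of \cite{descent} vanishes. You instead unwind what is essentially the proof of that proposition in the number-field case: reduce to cyclic $p$-power components of $Z(G)$ (with trivial Galois action, which you correctly justify, and which is also visible in the paper's own Lemma \ref{E:modelssections} framework, where $Z(G)=G\cap C(G)$ is central in the relevant group extension), identify $H^2$ with $p^e$-torsion in the Brauer group via Kummer theory once $\zeta_{p^e}$ has been adjoined, and kill the finitely many nonzero local invariants by letting the local degrees in the cyclotomic tower become divisible by $p^e$, then invoke Hasse--Brauer--Noether. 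What your route buys is an explicit finite layer $L_N=M(\zeta_{n^N})$ that is already a field of definition, so the ``in particular'' clause is immediate and effective; the paper obtains the finite subextension only implicitly (the cover, being of finite type, descends to a finitely generated, hence finite, subextension of the infinite cyclotomic field). Two small points need repair. First, your assertion that the local degree $[L_{N,w}:M_v]$ tends to infinity at \emph{each} place of $M$ fails at archimedean places, where the degree is at most $2$; this does not break the proof, since $\oper{Br}(\mathbb{R})[p^e]$ is trivial for odd $p$, and for $p=2$ every real place becomes complex once $\zeta_4\in L_N$, so the invariant $\tfrac{1}{2}$ is killed by the degree-$2$ local extension, but it should be said. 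Second, at the finite places you need the local degrees to become \emph{divisible} by $p^e$, not merely large; this does hold (the order of the residue cardinality $q_v$ modulo $p^N$ is eventually multiplied by exactly $p$ at each step of the tower, and for $v\mid p$ the ramification index contributes growing powers of $p$), but ``grows without bound'' is not by itself the right statement.
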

\begin{proof}
If $G$ is centerless, then the cover is defined over its field of moduli
(\cite{ch}) and therefore the theorem follows. Otherwise $\cup_{\{n|\exists
m:\,n|\,|Z(G)|^m\}}M(\zeta_n)$ satisfies the
hypotheses of Proposition 9 in Chapter II of \cite{serregalois}. We conclude
that $\oper{cd}_p(\cup_{\{n|\exists
m:\,n|\,|Z(G)|^m\}}M(\zeta_n))\leq 1$ for every prime $p$ that divides
$|Z(G)|$. This implies that $H^2(\cup_{\{n|\exists
m:\,n|\,|Z(G)|^m\}}M(\zeta_n),Z(G))$ is trivial. As the obstruction for this
field to be a field of definition lies in this group (\cite{descent}), we are
done.
\end{proof}
Combining Theorem \ref{E:abelian} with results that I have proven in \cite{me}, we get the following corollaries.
\begin{crl} \label{E:coolerigp} Let $G$ be a finite group. Then the following hold:
\begin{enumerate}
 \item
For every positive integer $r$ there is a set $T=\{a_1,...,a_r\}$ of closed
points of $\p_{\bar \q}$, such that every $G$-Galois branched cover of $\p_{\bar
\q}$ that is ramified only over $T$, has a field of definition that is unramified (over $\q$) outside of the primes dividing $|G|$.

\item For every positive integer $r$, and for every finite set $S$ of rational primes that don't divide
$|G|$, there is a choice of $\q$-rational points
$T=\{a_1,...,a_r\}$ such that every $G$-Galois \'etale cover of $\p_{\bar
\q}\smallsetminus T$ has a field of definition that is unramified (over $\q$) over 
the primes of $S$.

\item There is an extension of number
fields $\q\subset E\subset F$ such that $F/E$ is $G$-Galois, and $E/\q$
ramifies only over those primes that divide $|G|$.

\end{enumerate}
\end{crl}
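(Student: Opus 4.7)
The plan is to derive all three claims by combining Theorem \ref{E:abelian} with results from the author's earlier paper \cite{me} on controlling the ramification of the field of moduli by choosing the branch locus. The unifying observation is the following: Theorem \ref{E:abelian} produces a field of definition by adjoining to the field of moduli $M$ only roots of unity whose order divides some power of $|Z(G)|$. Since $|Z(G)|$ divides $|G|$, the corresponding cyclotomic extensions of $\q$ are ramified only over primes dividing $|G|$ (in fact, only over primes dividing $|Z(G)|$). Hence whenever $M/\q$ can already be made unramified away from a given set of primes disjoint from those dividing $|G|$, the same conclusion transfers to the field of definition produced by Theorem \ref{E:abelian}.

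For Claim (1), the plan is to invoke the statement from \cite{me} providing, for each positive integer $r$, a set $T$ of $r$ closed points of $\p_{\bar\q}$ with the property that any $G$-Galois branched cover ramified only over $T$ has field of moduli (as a $G$-Galois branched cover) unramified over $\q$ outside the primes dividing $|G|$. Applying Theorem \ref{E:abelian} to such a cover then yields a field of definition whose only additional ramification sits over primes dividing $|Z(G)|$, which are among the primes dividing $|G|$. Claim (2) proceeds in an entirely parallel fashion: one cites the analogous statement from \cite{me} furnishing a $\q$-rational set $T$ so that the field of moduli of any $G$-Galois \'etale cover of $\p_{\bar\q}\smallsetminus T$ is unramified over each prime in $S$. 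Because the primes in $S$ by hypothesis do not divide $|G|$, they also do not divide $|Z(G)|$, so the cyclotomic extensions introduced by Theorem \ref{E:abelian} remain unramified over $S$.

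For Claim (3), I would take $r$ large enough that a $G$-Galois branched cover of $\p_{\bar\q}$ ramified over the $T$ supplied by Claim (1) exists; this is guaranteed by Remark \ref{E:riemann} provided $r-1$ is at least the minimal number of generators of $G$. By Claim (1), such a cover admits a model $X_E\to \p_E$ as a $G$-Galois branched cover over a number field $E$ with $E/\q$ ramified only over primes dividing $|G|$. Hilbert's Irreducibility Theorem (\cite{fj}, Chapter 11) then provides an $E$-rational specialization yielding a $G$-Galois field extension $F/E$, giving the desired tower $\q\subset E\subset F$.

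The main obstacle is simply locating and correctly invoking the relevant statements from \cite{me}; once those are available as black boxes, the deductions above are formal. The one small verification beyond quoting is checking in Claim (2) that no prime of $S$ divides $|Z(G)|$, which is immediate from the assumption that no prime of $S$ divides $|G|$.
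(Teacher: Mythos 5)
Your proposal matches the paper's proof, which likewise derives Claims (1) and (2) by combining Theorem \ref{E:abelian} with Theorems 9.1 and 9.6 of \cite{me} (controlling the ramification of the field of moduli via the choice of branch locus), and obtains Claim (3) from Claim (1) by specializing. The additional details you supply --- that primes dividing $|Z(G)|$ divide $|G|$, and the use of Riemann's Existence Theorem and Hilbert's Irreducibility Theorem in Claim (3) --- are exactly the steps the paper leaves implicit.
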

\begin{proof}
Claims (1) and (2) of the corollary are straightforward from Theorems 9.1 and 9.6 of \cite{me}
respectively, together with Theorem \ref{E:abelian}. Claim (3) follows from Claim (1) by specializing.
\end{proof}

\newpage

\medskip
\noindent Current author information:\\
Hilaf Hasson: Department of Mathematics, Pennsylvania State University, State College, PA 16802, USA\\
email: {\tt hilafhasson@gmail.com}
\end{document}